\newtheorem{theorem}{Theorem}[section]
\newtheorem{corollary}[theorem]{Corollary}
\theoremstyle{definition}
\theoremstyle{remark}
\newtheorem{remark}[theorem]{Remark}
\newcommand{\Q}{\mathbb{Q}}
\newcommand{\cO}{\mathcal{O}}
\newcommand{\fkp}{\mathfrak{p}}
\renewcommand{\leq}{\leqslant}
\renewcommand{\geq}{\geqslant}
\newcommand{\of}[1]{\Big(#1\Big)}
\newcommand{\set}[1]{\left\{#1\right\}}
\newcommand{\smat}[1]{\begin{smallmatrix}#1\end{smallmatrix}}
\newcommand{\on}{\operatorname}
\newcommand{\Gal}{\on{Gal}}
\title[The Ramanujan sum and Chebotarev densities]{The Ramanujan sum and Chebotarev densities}
\author[B. Wang]{Biao Wang}
\address{Department of Mathematics, State University of New York at Buffalo, Buffalo, NY 14260-2900, USA}
\email{bwang32@buffalo.edu}
\date{}
\subjclass[2020]{11N13, 11R45}
\keywords{Ramanujan sum, M\"obius function, Chebotarev density, Prime number theorem, Smallest prime divisor}
\begin{document}

\begin{abstract}
		In this short note, we show an analogue of one of Alladi's and Dawsey's formulas with respect to the Ramanujan sum $c_n(m)$ for $m\geq1$. Their formulas may be viewed as the case $m=1$ in our result.
\end{abstract}

\maketitle

\section{Introduction and statement of results}
Let $\mu(n)$ be the M\"{o}bius function defined by $\mu(1)=1$, $\mu(n)=(-1)^k$ if $n$ is the product of $k$ distinct primes and is zero otherwise. It is well-known (e.g.,  \cite{d82}) that the prime number theorem is equivalent to the assertion that
$
\sum_{n=1}^\infty\frac{\mu(n)}{n}=0
$, 
or equivalently,
\begin{equation}\label{pnt}
	-\sum_{n=2}^\infty\frac{\mu(n)}{n}=1.
\end{equation}

Let $k\geq 1$ be an integer.  In 1977, Alladi \cite{a77} refined  (\ref{pnt}) to a formula on primes in arithmetic progressions. Explicitly, he showed that for any integer $\ell$ with $(\ell,k)=1$, we have
\begin{equation}\label{alladi}
	-\sum_{\smat{n\geq 2\\ p(n)\equiv \ell (\on{mod}k)}}\frac{\mu(n)}{n}=\frac1{\varphi(k)},
\end{equation}
where $p(n)$ is the smallest prime divisor of $n$ and $\varphi$ is the Euler totient function. 

In 2017, Dawsey \cite{d17} first generalized Alladi's formula (\ref{alladi}) to the setting of Chebotarev densities for finite Galois extensions
of $\Q$.  In 2019, Sweeting and Woo \cite{sw19}  generalized   (\ref{alladi}) further to number fields and Kural, McDonald and Sah \cite{kms19} generalized all of these results to the general densities of sets of primes. In \cite{w19}, we showed an analogue of Alladi's and Dawsey's results with respect to the Liouville function. Here we will show another analogue of their work with respect to Ramanujan sum, leaving the investigations of analogues of results in \cite{sw19} and \cite{kms19} to the interested readers.

For any positive integers $n$ and $m$, the Ramanujan sum $c_n(m)$ to modulus $n$ is defined as
$$c_n(m):=\sum_{\smat{1\leq q\leq n\\(q,n)=1}}e^{\frac{2\pi i qm}n},$$
which was introduced by Ramanujan \cite{r1918} in 1918. 
For fixed $m$, $c_n(m)$ is multiplicative on $n$ and $c_n(1)=\mu(n)$. 

Let $K/\Q$ be a finite Galois extension and let $G := \Gal(K/\Q)$ be the Galois group. For unramified prime $p$,  set
$$\left[\frac{K/\Q}{p}\right]:=\set{\left[\frac{K/\Q}{\fkp}\right]: \fkp \subseteq\cO_K  \text{ and } \fkp|p}$$ 
where $\left[\frac{K/\Q}{\fkp}\right]$ is the Artin symbol for Frobenius map.
It is well-known that $\left[\frac{K/\Q}{p}\right]$ is a conjugacy class in $G$. Our main result in this note is the following analogue of Dawsey's result \cite{d17}.

\begin{theorem}\label{mainthm}
	Let $m\geq 1$ be an integer. Let $K$ be a finite Galois extension of $\Q$ with Galois group $G = \Gal(K/\Q)$. Then for any conjugacy class $C\subseteq G$, we have 
	\begin{equation}\label{mainthmeq}
		-\sum_{\smat{n\geq 2\\ \left[\frac{K/\Q}{p(n)}\right]=C}}\frac{c_n(m)}{n}=\frac{|C|}{|G|}.
	\end{equation}
\end{theorem}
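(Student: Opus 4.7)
The plan is to reduce to Dawsey's theorem (the case $m=1$) by expanding the Ramanujan sum via the classical identity
$$c_n(m) = \sum_{d \mid \gcd(n, m)} d\,\mu(n/d).$$
Substituting this into the left-hand side of \eqref{mainthmeq}, interchanging the (finite) $d$-sum with the $n$-sum, and setting $n = dk$ rewrites the partial sum up to $x$ as
$$-\sum_{d \mid m} T_d(x), \qquad T_d(x) := \sum_{\substack{k \leq x/d,\, dk \geq 2 \\ [\frac{K/\Q}{p(dk)}] = C}} \frac{\mu(k)}{k}.$$
The term $T_1(x)$ is exactly the sum studied in \cite{d17}, so $T_1(x) \to -|C|/|G|$ as $x\to\infty$. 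It therefore suffices to prove $T_d(x) \to 0$ for each $d \geq 2$ dividing $m$; since $m$ is fixed, summing the finitely many $d$-contributions will then give the theorem.

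For such $d$, I would split $T_d(x)$ according to the comparison between $p(k)$ and $p(d)$. When $p(k) \geq p(d)$ (permitting $k = 1$), one has $p(dk) = p(d)$, so the Chebotarev restriction either kills the piece or reduces it to $\sum_{k \leq x/d,\, p(k) \geq p(d)} \mu(k)/k$. When $k \geq 2$ and $p(k) < p(d)$, one has $p(dk) = p(k)$, so setting $q = p(k)$ decomposes this piece into a finite sum (over primes $q < p(d)$ with $[\frac{K/\Q}{q}] = C$) of $\sum_{k \leq x/d,\, p(k) = q} \mu(k)/k$; factoring the smallest prime out as $k = qk'$ with $p(k') > q$ presents each inner sum as $-q^{-1}\sum_{k' \leq x/(dq),\, p(k') > q} \mu(k')/k'$.

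Both remaining sums vanish in the limit thanks to the standard PNT-equivalent lemma
$$\lim_{x \to \infty} \sum_{\substack{n \leq x \\ p(n) > y}} \frac{\mu(n)}{n} = 0 \qquad \text{for every } y \geq 0,$$
which follows by Möbius inversion from the indicator identity $\mathbf{1}[p(n) > y] = \sum_{e \mid \gcd(n, \prod_{p \leq y} p)} \mu(e)$ combined with the classical fact $\sum_{k \leq X,\,(k, e) = 1}\mu(k)/k \to 0$ for each fixed squarefree $e$. No new analytic input beyond Dawsey's theorem and this Möbius cancellation is needed.

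The main obstacle I expect is purely bookkeeping: one must handle cleanly the boundary cases ($k = 1$ versus $k \geq 2$, and the condition $dk \geq 2$), as well as the interaction between the Chebotarev constraint and whichever of $p(d)$ or $p(k)$ realizes $p(dk)$. Once the case split is organized correctly, summing the finite collection of $T_d(x)$ over $d \mid m$ and passing to the limit yields the claimed value $|C|/|G|$.
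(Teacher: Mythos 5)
Your proposal is correct and follows essentially the same route as the paper: both expand $c_n(m)=\sum_{d\mid(n,m)}d\,\mu(n/d)$, isolate the $d=1$ (i.e.\ $\mu(n)$) contribution as Dawsey's sum, split each $d\geq 2$ term according to whether $p(k)\geq p(d)$ or $p(k)<p(d)$, and kill the remainders using the vanishing of $\sum_{n\leq x,\,p(n)>y}\mu(n)/n$. The only cosmetic differences are that the paper packages the reduction as a general duality statement for arbitrary bounded $f(p(n))$ (so it invokes Dawsey's Theorem~2 on largest prime divisors rather than her final formula), and that it quotes Alladi's quantitative bound $M(x,y)\ll x\exp(-c\sqrt{\log x})$ where you give an elementary Möbius-inversion proof of the qualitative $o(1)$ statement --- which the paper's own remark notes is already sufficient.
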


Due to $c_n(1)=\mu(n)$, we can read Dawsey's result  in   (\ref{mainthmeq}) by letting $m=1$.  As in \cite{d17}, if $K=\Q(\zeta_k)$ where $\zeta_k$ is the $k$-th primitive unit root and $C$ is the conjugacy class of $\ell$, we get the following analogue of
Alladi's formula with respect to the Ramanujan sum $c_n(m)$.

\begin{corollary}
	Let $k\geq 1$, $\ell$ be integers and $(\ell,k)=1$. Then for any $m\geq 1$, we have
	\begin{equation}
		-\sum_{\smat{n\geq 2\\ p(n)\equiv \ell (\on{mod}k)}}\frac{c_n(m)}{n}=\frac1{\varphi(k)}.
	\end{equation}
\end{corollary}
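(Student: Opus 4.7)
The plan is to deduce the corollary from Theorem~\ref{mainthm} by specializing the Galois extension to $K=\Q(\zeta_k)$. All the real work lies in translating the Chebotarev condition $\left[\frac{K/\Q}{p(n)}\right]=C$ into the congruence $p(n)\equiv\ell\,(\on{mod}k)$; once that is done the identity drops out by direct substitution into (\ref{mainthmeq}).

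First I would recall the standard facts about the cyclotomic extension: $K=\Q(\zeta_k)$ is a finite Galois extension of $\Q$ with Galois group
\[
G=\Gal(\Q(\zeta_k)/\Q)\cong(\Z/k\Z)^\times
\]
of order $\varphi(k)$, and a rational prime $p$ is unramified in $K$ precisely when $p\nmid k$. Under the standard isomorphism, a residue class $\ell\in(\Z/k\Z)^\times$ corresponds to the automorphism $\sigma_\ell:\zeta_k\mapsto\zeta_k^\ell$. Because $G$ is abelian, every conjugacy class is a singleton $\{\sigma_\ell\}$, so $|C|=1$ for $C=\{\sigma_\ell\}$.

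Next I would identify the Artin symbol. For any unramified prime $p$ (i.e.\ $p\nmid k$), the Frobenius class $\left[\frac{K/\Q}{p}\right]$ is represented by the automorphism $\zeta_k\mapsto\zeta_k^p$, which under the isomorphism above is exactly the residue class of $p$ in $(\Z/k\Z)^\times$. Fixing $\ell$ with $(\ell,k)=1$ and taking $C=\{\sigma_\ell\}$, the condition $\left[\frac{K/\Q}{p(n)}\right]=C$ is therefore equivalent to $p(n)\equiv\ell\,(\on{mod}k)$. I would also remark that this congruence automatically forces $p(n)\nmid k$ (since $(\ell,k)=1$), so no ramified-prime contribution sneaks into either sum and the two index sets genuinely coincide.

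Finally, with these identifications in hand, Theorem~\ref{mainthm} gives
\[
-\sum_{\smat{n\geq 2\\ p(n)\equiv\ell\,(\on{mod}k)}}\frac{c_n(m)}{n}=\frac{|C|}{|G|}=\frac{1}{\varphi(k)},
\]
which is the asserted identity. I expect no real analytic or combinatorial obstacle here, as all the hard work is absorbed into Theorem~\ref{mainthm}; the only points requiring care are the abelianness of $G$ (so that the conjugacy class corresponding to $\ell$ has size exactly $1$) and the observation that the coprimality condition $(\ell,k)=1$ already excludes ramified primes. In this sense the corollary is a clean direct specialization rather than a separate argument.
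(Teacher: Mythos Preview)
Your proposal is correct and follows exactly the approach indicated in the paper: specialize Theorem~\ref{mainthm} to the cyclotomic extension $K=\Q(\zeta_k)$ with $C$ the (singleton) conjugacy class of $\ell$, so that $|C|/|G|=1/\varphi(k)$ and the Frobenius condition becomes $p(n)\equiv\ell\,(\on{mod}k)$. The paper itself gives no further details beyond this specialization, so your write-up simply fleshes out what is already implicit there.
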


\section{Ramanujan sum and M\"obius function}
Let  $m\geq1$ be a fixed integer. 
The Ramanujan sum $c_n(m)$ is closely related to the M\"obius function $\mu(n)$.  For instance, it is well-known (e.g., \cite{rm13}) that
\begin{equation}\label{rsmf}
	c_n(m)=\sum_{d|(n,m)}\mu\of{\frac{n}{d}}d,
\end{equation}
from which we get that $c_n(m)=\mu(n)$ if $(n,m)=1$.  See \cite{rm13}   for more properties of $c_n(m)$.
In this section, we mainly prove the analogue of the following  Alladi's theorem with respect to $c_n(m)$.

\begin{theorem}[{\cite[Theorem 6]{a77}}] \label{mainthmmuf}
	Let $P(n)$ be the largest prime divisor of  $n$. Then for any bounded function $f$ and constant $\delta$, we have
	\begin{equation}
		\sum_{n\leq x}f(P(n))\sim \delta\cdot x
	\end{equation}
	if and only if
	\begin{equation}
		-\sum_{n=2}^\infty \frac{\mu(n)f(p(n))}{n}=\delta.
	\end{equation}
\end{theorem}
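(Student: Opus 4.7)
My plan is to convert the asserted equivalence into a single analytic estimate by means of a Möbius-inversion-type identity. First I would show that for every integer $n\geq 2$,
\[
f(P(n)) \;=\; -\sum_{\substack{d\mid n \\ d\geq 2}}\mu(d)\,f(p(d)).
\]
Since only squarefree divisors contribute, this reduces to a combinatorial inclusion-exclusion on the prime support of $\on{rad}(n) = p_1\cdots p_k$ (with $p_1<\cdots<p_k$): grouping the squarefree divisors of $n$ by the minimum of their prime factors, the contribution at $p_j$ acquires a free $\pm 1$-choice over the subsets of $\{p_{j+1},\ldots,p_k\}$, so only the term $j=k$ survives and produces $-f(p_k)=-f(P(n))$.

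Summing this identity over $2\leq n\leq x$ and exchanging the order of summation produces
\[
\sum_{n\leq x} f(P(n)) \;=\; -x\,S(x) \;+\; E(x),
\]
where $S(x):=\sum_{2\leq d\leq x}\mu(d)f(p(d))/d$ and $E(x):=\sum_{2\leq d\leq x}\mu(d)f(p(d))\{x/d\}$. Dividing by $x$ gives
\[
\frac{1}{x}\sum_{n\leq x}f(P(n)) + S(x) \;=\; \frac{E(x)}{x},
\]
so both directions of the asserted equivalence collapse to the single statement $E(x)=o(x)$: once this is in hand, $A(x)/x\to\delta$ is literally equivalent to $S(x)\to -\delta$, which is the convergence of the series $-\sum\mu(n)f(p(n))/n$ to $\delta$.

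The hard part is precisely this $o(x)$-bound on the oscillating error $E(x)$. I would attack it by a Tauberian argument in the spirit of Axer's lemma: by linearity and a layer-cake decomposition one reduces to $f=\mathbf{1}_A$ for a set of primes $A$, and then one has to show the Möbius-twisted mean $\sum_{d\leq y}\mu(d)\,\mathbf{1}[p(d)\in A]=o(y)$ \emph{uniformly in} $A$, by combining the prime number theorem (applied to the inner sums $\sum_{m\leq y/p}\mu(m)$ after the splitting $d=pm$ with $p=p(d)\in A$, for $p$ up to a slowly growing threshold) with the elementary sieve observation that integers $d\leq y$ with $p(d)$ very large are essentially primes and contribute only $O(y/\log y)$. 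Once this mean-value input is secured, a dyadic decomposition of the range of $d$ into intervals on which $\{x/d\}$ is nearly constant transfers it into $E(x)=o(x)$, delivering both directions of the equivalence simultaneously.
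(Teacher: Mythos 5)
The paper does not actually prove this statement: it is imported verbatim as \cite[Theorem 6]{a77} and used as a black box (the paper's own work starts at Theorem \ref{mainthmf}, where the same skeleton is adapted to $c_n(m)$). Your proposal is therefore not competing with an argument in the paper but reconstructing Alladi's original one, and it follows the correct, standard route: the duality identity $\sum_{d\mid n,\, d\geq 2}\mu(d)f(p(d))=-f(P(n))$ (your inclusion--exclusion by the least prime of the subset is exactly right), the divisor swap giving $-xS(x)+E(x)$, the observation that both implications collapse to the unconditional estimate $E(x)=o(x)$, and an Axer-type lemma fed by the mean value $\sum_{d\leq y}\mu(d)f(p(d))=o(y)$. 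Three details in your sketch are stated loosely enough that they would not literally work as written, though none is fatal. (i) After splitting $d=pm$ with $p=p(d)$, the inner sum is not $\sum_{m\leq y/p}\mu(m)$ but the restricted sum $\sum_{m\leq y/p,\ p(m)>p}\mu(m)$; that this is $o(y/p)$ for fixed $p$ is still a PNT consequence (inclusion--exclusion over divisors of $\prod_{q\leq p}q$, or directly \cite[(3.5)]{a82} or \cite[Theorem 3]{t10}, the very inputs the paper invokes in the proof of Theorem \ref{mainthmf}), but it must be cited in that restricted form. (ii) Integers $d\leq y$ with $p(d)>z$ are ``essentially primes'' only when $z>\sqrt{y}$; what you actually need, and what suffices, is the Legendre/Mertens sieve bound $\#\{d\leq y:\ p(d)>z\}\ll y/\log z$ with $z\to\infty$ slowly. (iii) On the blocks $x/(k+1)<d\leq x/k$ the quantity $\{x/d\}=x/d-k$ is not nearly constant --- it sweeps through $[0,1)$ --- so the Axer step cannot be finished by ``freezing'' $\{x/d\}$; the classical proof uses partial summation of $a_d\cdot x/d$ against $\sum_{d\leq u}a_d=o(u)$ on each block, plus the trivial bound $\sum_{d\leq x/T}\abs{a_d}\ll x/T$ on the initial range. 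With these repairs (or by simply citing Axer's theorem and the restricted M\"obius mean values), your outline is a complete and faithful reconstruction of Alladi's proof.
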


\begin{theorem} \label{mainthmf}
	For any bounded function $f$ and constant $\delta$, we have
	\begin{equation}
		\sum_{n\leq x}f(P(n))\sim \delta\cdot x
	\end{equation}
	if and only if
	\begin{equation}
		-\sum_{n=2}^\infty \frac{c_n(m)f(p(n))}{n}=\delta.
	\end{equation}
\end{theorem}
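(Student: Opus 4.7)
The plan is to use identity (\ref{rsmf}) to express the Ramanujan-sum series as a finite linear combination of M\"obius-type sums, thereby reducing Theorem \ref{mainthmf} to Alladi's Theorem \ref{mainthmmuf}.

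Substituting $c_n(m)=\sum_{d\mid(n,m)}\mu(n/d)d$ and swapping the (finite) inner sum with the outer one, the change of variables $n=dk$ transforms the truncated sum into
\begin{equation*}
	-\sum_{n=2}^{N}\frac{c_n(m)f(p(n))}{n}=\sum_{d\mid m}\Biggl(-\sum_{\substack{1\leq k\leq N/d\\ dk\geq 2}}\frac{\mu(k)f(p(dk))}{k}\Biggr).
\end{equation*}
The $d=1$ contribution is the Alladi-type sum $-\sum_{2\leq k\leq N}\mu(k)f(p(k))/k$, so it suffices to prove that for every divisor $d>1$ of $m$, the series
\begin{equation*}
	T_d:=-\sum_{k\geq 1}\frac{\mu(k)f(p(dk))}{k}
\end{equation*}
converges to $0$, unconditionally on $f$ (only boundedness being needed).

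To show $T_d=0$, fix $d>1$ and set $q=p(d)$. For $k\geq 2$ one has $p(dk)=\min(q,p(k))$, so the bounded function $g$ on primes defined by $g(r)=f(r)$ when $r<q$ and $g(r)=f(q)$ when $r\geq q$ satisfies $f(p(dk))=g(p(k))$ for all $k\geq 2$. Since the set of $n\leq x$ with $P(n)<q$ consists of $q$-smooth integers and has cardinality $o(x)$, a direct splitting yields the unconditional asymptotic
\begin{equation*}
	\sum_{n\leq x}g(P(n))=\sum_{\substack{n\leq x\\ P(n)<q}}f(P(n))+f(q)\,\#\{n\leq x:P(n)\geq q\}=f(q)\,x+o(x).
\end{equation*}
Applying Theorem \ref{mainthmmuf} to $g$ with constant $f(q)$ gives $-\sum_{k\geq 2}\mu(k)g(p(k))/k=f(q)$, and adding the $k=1$ term, which contributes $-f(q)$, yields $T_d=-f(q)+f(q)=0$.

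Once $T_d=0$ is established for every $d>1$ dividing $m$, letting $N\to\infty$ in the finite-$N$ identity---which is legal because the outer sum is over the finite set of divisors of $m$ and each inner series converges---shows that
\begin{equation*}
	-\sum_{n=2}^{\infty}\frac{c_n(m)f(p(n))}{n}\quad\text{and}\quad -\sum_{k=2}^{\infty}\frac{\mu(k)f(p(k))}{k}
\end{equation*}
converge simultaneously and to the same value. The stated equivalence with $\sum_{n\leq x}f(P(n))\sim\delta\cdot x$ then follows directly from Theorem \ref{mainthmmuf}. The main technical step is the vanishing of $T_d$ for $d>1$; everything else is bookkeeping around (\ref{rsmf}) and the sparsity of smooth numbers.
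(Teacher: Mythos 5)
Your proof is correct, and while it begins with the same reduction as the paper --- substituting (\ref{rsmf}) and swapping sums so that the $d=1$ divisor recovers the M\"obius series of Theorem \ref{mainthmmuf} and the divisors $d>1$ of $m$ contribute error terms --- the way you kill the $d>1$ contributions is genuinely different. The paper splits the inner sum according to whether $p(n)\geq p(d)$, reduces everything to partial sums $\sum_{n\leq x,\,p(n)>y}\mu(n)/n$, and bounds these by $O(\exp(-c_2\sqrt{\log x}))$ via Alladi's estimate for $M(x,y)$ from \cite{a82} (or, as the paper remarks, the weaker $o(1)$ bound from \cite{t10}). You instead observe that $f(p(dk))=g(p(k))$ for the truncated function $g$, verify the trivial asymptotic $\sum_{n\leq x}g(P(n))\sim f(p(d))\,x$ from the sparsity of $p(d)$-smooth numbers, and bootstrap Theorem \ref{mainthmmuf} itself to get convergence of $T_d$ to $f(p(d))-f(p(d))=0$; your identification of $f(p(dk))$ with $g(p(k))$ and your cancellation against the $k=1$ term both check out. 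Your route is more self-contained --- it needs no input beyond Alladi's duality theorem and the density-zero property of smooth numbers, avoiding \cite{a82} and \cite{t10} entirely --- but it is purely qualitative: it yields no rate of decay for $\sum_{2\leq n\leq x}\frac{c_n(m)-\mu(n)}{n}f(p(n))$, whereas the paper's quantitative bound (\ref{errorterm}) is exactly what powers the error-term estimate $O\of{\exp(-c_5(\log x)^{1/3})}$ for (\ref{mainthmeq}) stated in the remark following the proof of Theorem \ref{mainthm}. So your argument suffices for Theorems \ref{mainthmf} and \ref{mainthm} as stated, but not for that refinement.
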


\begin{proof}
	By Theorem \ref{mainthmmuf}, it suffices to prove that 
	\begin{equation}\label{diffeq}
		\sum_{2\leq n\leq x}\frac{c_n(m)-\mu(n)}{n}f(p(n))=o(1).
	\end{equation}
	
	First, by  (\ref{rsmf}) we have
	$$c_n(m)-\mu(n)=\sum_{\smat{d|(n,m)\\ d>1}}\mu\of{\frac{n}{d}}d.$$
	
	It follows that the left side of  (\ref{diffeq}) can be rewritten as follows
	\begin{align}\label{pfeq1}
		\sum_{2\leq n\leq x}\frac{c_n(m)-\mu(n)}{n}f(p(n))&=\sum_{2\leq n\leq x} \sum_{\smat{d|(n,m)\\ d>1}} \frac{\mu\of{\frac{n}{d}}}{\frac{n}{d}}f(p(n))\nonumber\\
		&=\sum_{\smat{d|m\\d>1}}\sum_{1\leq n\leq \frac{x}{d}}\frac{\mu(n)}{n}f(p(dn))
	\end{align}
	
	For the inside summation, we set $p(1)=\infty$ for convenience. Then we separate  (\ref{pfeq1}) into two parts:
	\begin{align} \label{pfeq2}
		\sum_{1\leq n\leq x}\frac{\mu(n)}{n}f(p(dn))&=\sum_{\smat{1\leq n\leq x\\ p(n)\geq p(d)}}\frac{\mu(n)}{n}f(p(d))+\sum_{\smat{1\leq n\leq x\\ p(n)< p(d)}}\frac{\mu(n)}{n}f(p(n))\nonumber\\
		&=f(p(d))\sum_{\smat{1\leq n\leq x\\ p(n)\geq p(d)}}\frac{\mu(n)}{n}+\sum_{p<p(d)}f(p)\sum_{\smat{1\leq n\leq x\\ p(n)=p}}\frac{\mu(n)}{n}\nonumber\\
		&=f(p(d))\sum_{\smat{1\leq n\leq x\\ p(n)\geq p(d)}}\frac{\mu(n)}{n}-\sum_{p<p(d)}\frac{f(p)}{p}\sum_{\smat{1\leq n\leq \frac{x}{p}\\ p(n)>p}}\frac{\mu(n)}{n}
	\end{align}
	
	Now for the summations on $\frac{\mu(n)}n$, we consider the partial sum 
	$$M(x,y):=\sum_{\smat{1\leq n\leq x\\ p(n)> y}}\mu(n)$$
	for $x,y\geq 1$. 
	By  \cite[(3.5)]{a82}, for fixed $y$, we have
	\begin{equation}
		M(x,y)=O\of{{x}\cdot{\exp(-c_1\sqrt{\log x})}},
	\end{equation}
	where $c_1$ is a positive constant depending only on $y$.
	Then by partial summation, we get that 
	\begin{equation}\label{muupbound}
		\sum_{\smat{1\leq n\leq x\\ p(n)>y}}\frac{\mu(n)}n=O\of{\exp(-c_2\sqrt{\log x})}
	\end{equation}
	for some constant $c_2>0$.
	
	Combining  (\ref{pfeq1}), (\ref{pfeq2}) and (\ref{muupbound}) together, we get that
	\begin{equation}\label{errorterm}
		\sum_{2\leq n\leq x}\frac{c_n(m)-\mu(n)}{n}f(p(n))=O\of{\exp(-c_3\sqrt{\log x})}
	\end{equation} 
	for some constant $c_3>0$.
	This gives  (\ref{diffeq}) and completes the proof of Theorem \ref{mainthmf}. 
\end{proof}

\begin{remark} By \cite[Theorem 3]{t10}, one can also conclude a weaker bound of   (\ref{muupbound}) which is sufficient to Theorem \ref{mainthmf}:
	$$\sum_{\smat{1\leq n\leq x\\ p(n)> y}}\frac{\mu(n)}n=o(1).$$
\end{remark}

\section{Proof of Theorem \ref{mainthm}}

To prove Theorem \ref{mainthm}, we need the following theorem on the density of the largest prime divisors of integers in the finite Galois extensions of $\Q$, which can be converted into the desired formula (\ref{mainthmeq}) via Theorem \ref{mainthmf}. 

\begin{theorem}[{\cite[Theorem 2]{d17}}]\label{Pndensity}
	Under the notation and assumptions of Theorem \ref{mainthm}, we have
	\begin{equation}
		\sum_{\smat{2\leq n\leq x\\ \left[\frac{K/\Q}{P(n)}\right]=C}}1=\frac{|C|}{|G|}\cdot x+O\of{x\cdot \exp(-c_4(\log x)^{\frac13})},
	\end{equation}
	where $c_4>0$ is a constant.	
\end{theorem}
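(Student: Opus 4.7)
The idea is to reduce the counting problem to an application of the effective Chebotarev density theorem via the smooth-number counting function $\Psi(y, z) := \#\{m \leq y : q \mid m \Rightarrow q \leq z\}$. Every integer $n \geq 2$ factors uniquely as $n = P(n) \cdot m$ with $m \leq n/P(n)$ and $m$ being $P(n)$-smooth. Writing $N_C(x)$ for the left-hand side, this yields the identity
$$N_C(x) = \sum_{\substack{p \leq x \\ \left[\frac{K/\Q}{p}\right] = C}} \Psi(x/p, p),$$
and dropping the Frobenius condition gives the trivial total
$$\lfloor x \rfloor - 1 = \sum_{p \leq x} \Psi(x/p, p).$$
Subtracting, one must bound
$$R(x) := \sum_{p \leq x} \Bigl( \mathbf{1}_{\left[\frac{K/\Q}{p}\right] = C} - \tfrac{|C|}{|G|} \Bigr) \Psi(x/p, p).$$

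I would then apply Abel summation in $p$ using the unconditional Lagarias--Odlyzko form of the effective Chebotarev density theorem,
$$E_C(t) := \pi_C(t) - \tfrac{|C|}{|G|}\, \pi(t) \ll_K t \exp\bigl(-c_1 \sqrt{\log t}\bigr),$$
against the weight $p \mapsto \Psi(x/p, p)$. Since $\Psi(x/p, p) = \lfloor x/p \rfloor$ whenever $p > \sqrt{x}$, the range $p > \sqrt{x}$ is handled by partial summation directly. For smaller primes one controls $\Psi(x/p, p)$ using standard smooth-number estimates of de Bruijn / Hildebrand--Tenenbaum type, together with Brun--Titchmarsh bounds for $\pi_C(t)$ on short intervals.

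The crux is to split the prime sum at a carefully chosen threshold $y = y(x)$ and optimize: the Chebotarev remainder saves the factor $\exp(-c_1 \sqrt{\log t})$ only when $t$ is large, while $\Psi(x/p, p)$ is heaviest at small $p$. Balancing these two regimes is the main obstacle, and the optimal choice of $y$ is precisely what produces the exponent $1/3$ in the claimed error $O\bigl(x \exp(-c_4 (\log x)^{1/3})\bigr)$.
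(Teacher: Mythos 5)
The paper does not prove this statement at all: it is imported verbatim as Theorem~2 of Dawsey \cite{d17}, whose proof in turn adapts Alladi's argument for arithmetic progressions \cite{a77}. So there is no internal proof to compare against; what can be said is how your sketch relates to the actual Alladi--Dawsey argument. Your opening identity $N_C(x)=\sum_{p\leq x,\,[\frac{K/\Q}{p}]=C}\Psi(x/p,p)$ is exactly the right starting point, and you correctly locate the source of the exponent $\tfrac13$: balancing the Chebotarev saving $\exp(-c\sqrt{\log y})$ against the scarcity of $y$-smooth numbers, with the optimal cut at $\log y\asymp(\log x)^{2/3}$ so that $u=\log x/\log y\asymp(\log x)^{1/3}$ makes both error terms of size $x\exp(-c(\log x)^{1/3})$. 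This is the same route Dawsey takes, with the Lagarias--Odlyzko effective Chebotarev theorem replacing the Siegel--Walfisz input in Alladi's original.

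Two caveats. First, the small-prime range is handled more simply than you suggest: no Brun--Titchmarsh input is needed, since $\sum_{p\leq y}\Psi(x/p,p)=\Psi(x,y)$ exactly (each $y$-smooth $n\leq x$ is counted once via $p=P(n)$), and de Bruijn's bound $\Psi(x,y)\ll x\exp(-\tfrac12 u\log u)$ disposes of it; invoking sieve bounds for $\pi_C$ in short intervals would actually be problematic, as there is no off-the-shelf Brun--Titchmarsh theorem for general conjugacy classes. Second, and more substantively, the middle range $y<p\leq\sqrt{x}$ is where the real work lies: there $\Psi(x/p,p)$ is neither trivial nor equal to $\lfloor x/p\rfloor$, and partial summation of $E_C$ against this non-monotonic weight requires either controlling its total variation or (as Alladi and Dawsey do) interchanging the order of summation, writing the sum over $m\leq x/y$ of the count of primes $p\in(\max(y,P(m)),x/m]$ with Frobenius class $C$, and applying effective Chebotarev to each such interval. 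Your proposal names this step as ``the crux'' but does not carry it out, so as written it is a correct plan rather than a proof.
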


\paragraph{Proof of Theorem \ref{mainthm}.}
Define an arithmetic function $f(n)$ by
$$f(n)=\left\{ \begin{matrix}
1,& \text{if } \left[\frac{K/\Q}{p}\right]=C, n=p>1;\\
0,& \text{otherwise}.
\end{matrix}\right.$$
Then $f$ is a bounded function. By Theorem \ref{Pndensity} above, 
$$\sum_{n\leq x}f(P(n))\sim \frac{|C|}{|G|} \cdot x.$$
Then by Theorem \ref{mainthmf}, we have
$$-\sum_{n=2}^\infty \frac{c_n(m)f(p(n))}{n}= \frac{|C|}{|G|},$$
which turns out to be  (\ref{mainthmeq}). This completes the proof of Theorem \ref{mainthm}.
\qed

\begin{remark}
	Combining  \cite[(10)]{d17} and  (\ref{errorterm}) together gives an error term estimate for  (\ref{mainthmeq}):
	\begin{equation}
		-\sum_{\smat{2\leq n\leq x\\ \left[\frac{K/\Q}{p(n)}\right]=C}}\frac{c_n(m)}{n}=\frac{|C|}{|G|}+O\of{\exp(-c_5(\log x)^{\frac13})},
	\end{equation}
	where $c_5>0$ is a constant.	
\end{remark}

\begin{remark}
	In general, Theorem \ref{mainthm} holds for the following generalized Ramanujan sum $c_n(m;s,g)$  which is defined as (e.g., \cite{k17})
	$$c_n(m;s,g)=\sum_{\smat{d|n\\d^s|m}}g(d)\mu\of{\frac{n}{d}}$$
	for any fixed integers $m,s\geq1$ and any arithmetic function $g$ with $g(1)=1$. When $s=1$ and $g(d)=d$, $c_n(m;s,g)=c_n(m)$ is the classical Ramanujan sum; when $g(d)=d^s$, $c_n(m;s,f)$ is the Cohen-Ramanujan sum \cite{c1949}.
\end{remark}

\begin{remark}
	Using the arguments in this note, one can also show that for integers $m,\ell,k\geq1$ and $(\ell,k)=1$,
	\begin{equation}
		-\sum_{\smat{n\geq 2\\ p(n)\equiv \ell (\on{mod}k)}}\frac{\mu(mn)}{n}=\frac{\mu(m)}{\varphi(k)}
	\end{equation}
	and that Theorem \ref{mainthm}  holds with respect to $\mu(mn)$.
\end{remark}

\section*{acknowledgements}
	The author would like to thank his advisor Professor Xiaoqing Li for her continuous support, and Liyang Yang for helpful discussions during the conference in LA.  The author would also like to thank  the anonymous referee  for a careful reading of the paper and helpful corrections and suggestions.

\end{document}